\numberwithin{equation}{section}
\newtheorem{theorem}{Theorem}[section]
\newtheorem{lemma}[theorem]{Lemma}
\newtheorem{corollary}[theorem]{Corollary}
\theoremstyle{definition}
\newtheorem{definition}[theorem]{Definition}
\theoremstyle{remark}
\newcommand{\mc}[1]{\mathcal{#1}}
\newcommand{\setm}{\setminus}
\newcommand{\empt}{\emptyset}
\newcommand{\subs}{\subset}
\def\<{\left\langle}
\def\>{\right\rangle}
\def\br#1;#2;{\bigl[ {#1} \bigr]^ {#2} }
\DeclareMathOperator{\FF}{F}
\DeclareMathOperator{\PSIC}{\psi_c}
\DeclareMathOperator{\lin}{L}
\DeclareMathOperator{\w}{w}
\author[I. Juh\'asz]{Istv\'an Juh\'asz}
\address      { Alfr\'ed Rényi Institute of Mathematics}
\email{juhasz@renyi.hu}
\author[L. Soukup]{Lajos Soukup}
\thanks
  {
   }
\address
      { Alfr{\'e}d R{\'e}nyi Institute of Mathematics%
}
\email{soukup@renyi.hu}
\author[Z. Szentmikl\'ossy]{Zolt\'an Szentmikl\'ossy}
\address{E\"otv\"os University of Budapest}
\email{szentmiklossyz@gmail.com}
\subjclass[2010]{54A25,54A35}
\keywords{cardinal function, free set, free set number, $G_{\delta}$-modification}
\title[On the free set number]{On the free set number of topological spaces and their $G_\delta$-modifications}
\thanks{The research on and preparation of this paper was
supported by  NKFIH grant no.  K 129211.}
\date{\today}
\begin{document}

\begin{abstract}
For a topological space $X$ we propose to call a {\em subset} $S \subs X$ {\em free} in $X$ if it admits a well-ordering
that turns it into a free sequence in $X$. The well-known cardinal function $\FF(X)$ is then definable as
$\sup\{|S| : S \text{ is free in } X\}$ and will be called the free set number of $X$.

We prove several new inequalities involving $\FF(X)$ and $\FF(X_\delta)$, where $X_\delta$ is the $G_{\delta}$-modification of $X$:

\smallskip

\begin{itemize}
\item $\lin(X) \le 2^{2^{\FF(X)}}$ if $X$ is $T_2$ and $\lin(X)\le 2^{\FF(X)}$ if $X$ is $T_3$;

\smallskip

\item $|X|\le 2^{2^{\FF(X) \cdot \PSIC(X)}} \le 2^{2^{\FF(X) \cdot \chi(X)}}$ for any $T_2$-space $X$;

\smallskip

\item $\FF(X_{\delta})\le 2^{2^{2^{\FF(X)}}}$ if $X$ is $T_2$ and $\FF(X_{\delta})\le 2^{2^{\FF(X)}}$ if $X$ is $T_3$.
\end{itemize}

\end{abstract}

\maketitle

\section{Free sequences, free sets, and the free set number}

As is well-known, a transfinite sequence $\<x_\alpha : \alpha < \eta\>$ of points of a
topological space $X$ is said to be a {\em free sequence} in $X$ if the closure of any
initial segment of it is disjoint from the closure of the corresponding final segment,
i.e. $$\overline{\{x_\alpha : \alpha < \beta\}} \cap \overline{\{x_\alpha : \alpha \ge \beta\}} = \emptyset$$
for each $\beta < \eta$. Also, the cardinal function $\FF(X)$ which is defined as the
supremum of those cardinals $\kappa$ for which there is a free sequence in $X$ of length $\kappa$
has been thoroughly investigated.

Unlike many other cardinal functions, $\FF(X)$ so far had
no name, probably because in general the length of a sequence is not a cardinal number.
So, as the aim of this paper is to present several new results concerning $\FF(X)$,
we decided to pass from sequences to sets and call a {\em subset} $S \subs X$ {\em free} in $X$ if it admits a well-ordering,
or equivalently an indexing by ordinals,
that turns it into a free sequence in $X$.
In other words, free sets in $X$ are just the ranges of free sequences in $X$. We shall use $\mc{F}(X)$ to denote the collection of
all free subsets in $X$. Clearly, then $$\FF(X) = \sup\{|S| : S \in \mc{F}(X)\},$$
hence we propose to call $\FF(X)$ the {\em free set number} of $X$.

All our other terminology and notation is standard, as e.g. it is in \cite{Ju}.

\section{Some new inequalities involving the free set number}

We start by presenting a very simple and basic but very general construction of free sets. It has been used
by many authors in numerous particular cases.

\begin{lemma}\label{lm:freeinB}
Assume that $X$ is a space, $A \subs X$, $\,{\kappa}$ is an infinite cardinal, and
$\mc W\subs \tau(X)$,  moreover
\begin{enumerate}[(a)]
\item $\mc W$ is closed under unions of subfamilies of size $<\,\kappa$,
\item $A\setm W\ne \empt$ for each $W\in \mc W$,
\item for each $S \subs A$ with $S\in \mc{F}(X)$ and $|S| < \kappa$ there is $W\in \mc W$
with $\overline{S}\subs W$.
\end{enumerate}
Then $\mc{F}(X) \cap [A]^\kappa \ne \emptyset$, i.e. there is a subset of $A$ of size $\kappa$ that is free in $X$.
\end{lemma}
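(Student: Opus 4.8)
The plan is to build the required free subset of $A$ of size $\kappa$ by a transfinite recursion of length $\kappa$, in which the three hypotheses play the obvious roles: (c) produces an open set capturing the closure of what has been built so far, (a) enlarges it so that the open sets obtained form an increasing tower, and (b) supplies the next point of $A$ outside it. Concretely, I would recursively choose points $x_\alpha\in A$ and open sets $W_\alpha\in\mc W$ for $\alpha<\kappa$ so that, writing $S_\alpha=\{x_\gamma:\gamma<\alpha\}$, we have: $S_\alpha$ with its natural well-ordering is a free sequence in $X$; $\overline{S_\alpha}\subs W_\alpha$; $W_\beta\subs W_\alpha$ whenever $\beta<\alpha$; and $x_\alpha\in A\setm W_\alpha$. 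At stage $\alpha$, since $S_\alpha$ is free and $|S_\alpha|\le|\alpha|<\kappa$, hypothesis (c) yields some $W'\in\mc W$ with $\overline{S_\alpha}\subs W'$; then $W_\alpha:=W'\cup\bigcup_{\beta<\alpha}W_\beta$ lies in $\mc W$ by (a), being the union of fewer than $\kappa$ members of $\mc W$; and (b) lets us pick $x_\alpha\in A\setm W_\alpha$. Since $x_\alpha\notin W_\alpha\supseteq S_\alpha$, the points $x_\alpha$ are pairwise distinct, so $|S_\kappa|=\kappa$.

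What must be verified is that the invariant ``$S_\alpha$ is free'' survives the recursion, which has a successor and a limit case. For the successor step I would use the elementary fact that if $C\subs W$ with $W$ open and $p\notin W$ then $\overline{\{p\}}\cap C=\empt$ (any point of $C$ has $W$ for an open neighbourhood, so if it lay in $\overline{\{p\}}$ we would get $p\in W$). Applied with $C=\overline{S_\alpha}$, $W=W_\alpha$ and $p=x_\alpha$ this gives $\overline{S_\alpha}\cap\overline{\{x_\alpha\}}=\empt$; combined with the freeness of $S_\alpha$ and the identity $\overline{Y\cup\{x_\alpha\}}=\overline{Y}\cup\overline{\{x_\alpha\}}$ it follows that $S_{\alpha+1}$ is free, using no separation axiom. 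For a limit $\lambda\le\kappa$ and any $\beta<\lambda$, each $x_\gamma$ with $\beta\le\gamma<\lambda$ satisfies $x_\gamma\notin W_\gamma\supseteq W_\beta$, so $\{x_\gamma:\beta\le\gamma<\lambda\}\subs X\setm W_\beta$, whence its closure lies in the closed set $X\setm W_\beta$ and is therefore disjoint from $\overline{S_\beta}\subs W_\beta$; thus $S_\lambda$ is free. Taking $\lambda=\kappa$, the set $S_\kappa$ is a free subset of $A$ of cardinality $\kappa$, i.e.\ $\mc F(X)\cap[A]^\kappa\ne\empt$.

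The crux — and the reason all three conditions are needed — is the limit step: freeness of initial segments does not automatically pass to their union, and it is exactly the increasing tower $(W_\alpha)_{\alpha<\kappa}$, available thanks to the closure of $\mc W$ under $<\kappa$-sized unions in (a), that bridges this gap. Condition (c) drives the recursion (and, applied to $S=\empt$, already forces $\mc W\ne\empt$), while (b) keeps it from terminating early. Beyond this bookkeeping I expect no genuine difficulty; the one point to handle with a little care is to phrase the step ``the new point misses the old closure'' through closures of singletons, so that the argument remains valid for arbitrary — not necessarily $T_1$ — spaces.
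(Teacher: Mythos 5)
Your proof is correct and follows essentially the same recursion as the paper's: (c) supplies an open set covering the closure of the initial segment, (a) absorbs all previously chosen sets into one member of $\mc W$, and (b) provides the next point of $A$ outside it; your cumulative tower $W_\alpha$ is just the paper's choice of $x_\alpha\in A\setm\bigcup_{\beta\le\alpha}W_\beta$ with different bookkeeping. The additional care you take in verifying freeness at limit stages and via closures of singletons (so that no separation axiom is needed) only makes explicit what the paper dismisses with ``clearly''.
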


\begin{proof}
By transfinite recursion on ${\alpha}<{\kappa}$ we shall define points
$x_{\alpha} \in A$ and open sets $W_{\alpha} \in \mc W$ such that
      $\overline {\{x_{\beta}:{\beta}<{\alpha}\}}\subs W_{\alpha}$
      and $W_{\alpha}\cap \{x_{\beta}:{\beta}\ge {\alpha}\}=\empt$.

In the ${\alpha}$th step we clearly have $\{x_{\beta}:{\beta}<{\alpha}\} \in \mc{F}(X)$, hence we may
use (c) to pick $W_{\alpha}\in \mc W$ with $\overline {\{x_{\beta}:{\beta}<{\alpha}\}}\subs W_\alpha$.
By (a) and (b) we can then pick $x_\alpha \in A \setm \bigcup_{{\beta}\le {\alpha}}W_{\beta}$.

Clearly, the set  $\{x_{\alpha}:{\alpha}<{\kappa}\}$ is as required.
\end{proof}

We shall only use Lemma \ref{lm:freeinB} in cases where $\kappa = \lambda^+$ is a successor cardinal.
The generality of Lemma \ref{lm:freeinB} may turn out to be useful in studying the "hat version"
$$\widehat{\FF}(X) = \min \{\kappa : |S| < \kappa \text{ for all } S \in \mc{F}(X)\}$$ of $\FF(X)$
in cases when $\FF(X)$ is a limit cardinal.

\smallskip

Next we introduce two auxiliary cardinal functions whose definitions involve free sets
and that will play important role in what follows.

\begin{definition}
Given a topological space $X$ let
\begin{enumerate}[(i)]
\item ${\mu}(X)=\sup\{\lin(\overline {S}):S\in \mc F(X)\}$,
\item ${\nu}(X)=\sup\{|\overline{S}|:S\in \mc F(X)\}$.
\end{enumerate}
\end{definition}

Clearly, we have $\mu(X) \le \nu(X)$ for any space $X$ and $\nu(X) \le 2^{2^{\FF(X)}}$ if $X$ is $T_2$.

\begin{theorem}\label{tm:LsupLsmall}
For each topological space $X$ we have
\begin{displaymath}
\lin(X)\le \FF(X)\cdot{\mu}(X).
\end{displaymath}
\end{theorem}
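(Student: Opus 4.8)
The plan is to argue by contradiction. Put $\kappa := \FF(X)\cdot\mu(X)$; note that $\kappa\ge\omega$, since all the cardinal functions involved are infinite by convention. Assume $\lin(X)>\kappa$, so that there is an open cover $\mc U$ of $X$ with no subcover of size $\le\kappa$. I will produce a free subset of $X$ of size $\kappa^+$, which contradicts $\FF(X)\le\kappa$.

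To do this I would apply Lemma \ref{lm:freeinB} with $A:=X$, with the successor cardinal $\kappa^+$ playing the role of $\kappa$ there, and with
\[
\mc W := \Bigl\{\textstyle\bigcup\mc V : \mc V\in[\mc U]^{\le\kappa}\Bigr\}\subs\tau(X).
\]
Condition (a) — closure of $\mc W$ under unions of subfamilies of size $<\kappa^+$, i.e. of size $\le\kappa$ — holds because a union of $\le\kappa$ members of $\mc W$ is a union of at most $\kappa\cdot\kappa=\kappa$ members of $\mc U$, hence again lies in $\mc W$. Condition (b), that $X\setm W\ne\empt$ for every $W\in\mc W$, is precisely the statement that $\mc U$ has no subcover of size $\le\kappa$.

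The key step is verifying (c). Let $S\subs X$ be free in $X$ with $|S|\le\kappa$, so $S\in\mc F(X)$; then by the definition of $\mu$ we have $\lin(\overline S)\le\mu(X)\le\kappa$. Since $\{U\cap\overline S : U\in\mc U\}$ is an open cover of the subspace $\overline S$, it admits a subcover of size $\le\lin(\overline S)\le\kappa$; that is, there is $\mc V\in[\mc U]^{\le\kappa}$ with $\overline S\subs\bigcup\mc V$. Setting $W:=\bigcup\mc V\in\mc W$ gives $\overline S\subs W$, as (c) requires. Lemma \ref{lm:freeinB} now yields some $S^{*}\in\mc F(X)$ with $|S^{*}|=\kappa^+>\FF(X)$, the desired contradiction.

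I do not expect a genuine difficulty here: once one sees that Lemma \ref{lm:freeinB} is the right device, the argument is essentially bookkeeping. The only two points calling for a moment's attention are the cardinal arithmetic $\kappa\cdot\kappa=\kappa$ behind (a), and, in (c), the routine passage from a small subcover of $\overline S$ by relatively open sets to a small subfamily of $\mc U$ whose union contains $\overline S$.
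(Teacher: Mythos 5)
Your proof is correct and is essentially identical to the paper's: both argue by contradiction, take $\mc W$ to be the set of unions of $\le\kappa$-sized subfamilies of a cover $\mc U$ with no small subcover, and invoke Lemma \ref{lm:freeinB} with $A=X$ and $\kappa^+$ to produce a free set of size $\kappa^+>\FF(X)$. Your verification of conditions (a)--(c) just spells out details the paper leaves implicit.
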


\begin{proof}
Let us write ${\lambda}=\FF(X)\cdot {\mu}(X)$ and
assume, on the contrary, that the open cover $\mc U$ of $X$ has no subcover of size $\le \lambda$.
If we put
\begin{displaymath}
\mc W=\Big\{\bigcup \mc V: \mc V\in {[\mc U]}^{{\le \lambda}}\Big\},
\end{displaymath}
then for every $S\in \mc F(X)\}$ there is $W \in \mc W$ with $\overline S \subs W$ because
$\lin(\overline S)\le {\mu}(X)\le \lambda$. Thus we may
apply Lemma \ref{lm:freeinB} with the choices $A = X$, $\mc W$, and $\kappa = \lambda^+$
to obtain a free set in $X$ of size ${\lambda}^+ > \FF(X)$, a contradiction.
\end{proof}

For every subset $S$ of a $T_2$-space $X$ we have $\lin(\overline S)\le |\overline S| \le 2^{2^{|S|}}$,
consequently, ${\mu}(X) \le 2^{2^{\FF(X)}}$. If, on the other hand, $X$ is even $T_3$ then
$\lin(\overline S)\le \w(\overline S) \le 2^{|{S}|}$ for every $S \subs X$, hence ${\mu}(X) \le 2^{\FF(X)}$. From these
we immediately obtain the following result.

\begin{corollary}\label{cr:Lle22F}
(i) If $X$ is any $T_2$-space then
\begin{displaymath}
\lin(X)\le 2^{2^{\FF(X)}}.
\end{displaymath}

\smallskip

(ii) If $X$ is $T_3$ then
\begin{displaymath}
\lin(X)\le 2^{\FF(X)}.
\end{displaymath}
\end{corollary}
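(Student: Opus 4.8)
The plan is to obtain the corollary as an immediate consequence of Theorem~\ref{tm:LsupLsmall}, which already reduces the estimation of $\lin(X)$ to that of the auxiliary function ${\mu}(X)=\sup\{\lin(\overline S):S\in\mc F(X)\}$ via $\lin(X)\le\FF(X)\cdot{\mu}(X)$. Thus the whole task is to bound ${\mu}(X)$ by $2^{2^{\FF(X)}}$ in the $T_2$ case and by $2^{\FF(X)}$ in the $T_3$ case, and then to absorb the harmless factor $\FF(X)$.

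For part (i) I would bound each term $\lin(\overline S)$ crudely by $|\overline S|$ and then apply the standard Pospíšil-type inequality $|Y|\le 2^{2^{d(Y)}}$, valid for every $T_2$-space $Y$, to $Y=\overline S$. Since $S$ is dense in $\overline S$ we have $d(\overline S)\le|S|\le\FF(X)$, whence $\lin(\overline S)\le 2^{2^{\FF(X)}}$ for every $S\in\mc F(X)$ and therefore ${\mu}(X)\le 2^{2^{\FF(X)}}$. For part (ii) the same scheme works with the sharper estimate $\lin(\overline S)\le\w(\overline S)$ followed by the inequality $\w(Y)\le 2^{d(Y)}$, which holds in every $T_3$-space $Y$; this gives $\lin(\overline S)\le 2^{|S|}\le 2^{\FF(X)}$ and hence ${\mu}(X)\le 2^{\FF(X)}$. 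In both cases one plugs the bound on ${\mu}(X)$ into Theorem~\ref{tm:LsupLsmall} and notes that, $\FF(X)$ being infinite, it is dominated by $2^{2^{\FF(X)}}$ (respectively by $2^{\FF(X)}$), so that the product $\FF(X)\cdot{\mu}(X)$ collapses to the desired single exponential tower.

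I do not expect any genuine obstacle here: essentially all the content is carried by Theorem~\ref{tm:LsupLsmall}, and the remaining ingredients — the cardinality bound $|Y|\le 2^{2^{d(Y)}}$ for $T_2$-spaces, the weight bound $\w(Y)\le 2^{d(Y)}$ for $T_3$-spaces (both standard, see \cite{Ju}), and the triviality $d(\overline S)\le|S|$ — require no work. The only point deserving a word of care is the usual convention that cardinal functions are at least $\omega$, since that is exactly what legitimizes the absorption of the $\FF(X)$ factor in the final step.
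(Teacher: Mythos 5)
Your proof is correct and follows exactly the paper's route: the paper likewise bounds $\lin(\overline S)\le|\overline S|\le 2^{2^{|S|}}$ in the $T_2$ case and $\lin(\overline S)\le\w(\overline S)\le 2^{|S|}$ in the $T_3$ case to get the stated bounds on ${\mu}(X)$, then invokes Theorem~\ref{tm:LsupLsmall}. Your extra remark about absorbing the factor $\FF(X)$ is a correct (and left implicit in the paper) final step.
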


Before we turn to our next result, we recall
that  the closed pseudocharacter $\psi_c(p,X)$ of a point $p$ in a space $X$ is defined
as the smallest number of {\em closed} neighborhoods of $p$ whose intersection is $\{p\}$,
see e.g. \cite[p. 8.]{Ju}, 
Of course, $X$ is $T_2$ exactly then when $\psi_c(p,X)$ is defined for all $p \in X$.
In this case
        \begin{displaymath}
        \psi_c(X)=\sup\{\psi_c(p,X):p\in X\}
        \end{displaymath}
is the closed pseudocharacter of the space $X$. Clearly, $\psi_c(p,X) \le \chi(p,X)$ and $\psi_c(X) \le \chi(X)$,
moreover $|X| \le \varrho(X)^{\psi_c(X)}$, where $\varrho(X)$ is the number of regular closed sets in
a $T_2$-space $X$.

\begin{corollary}\label{cr:LXleFpsi}
If $X$ is any $T_2$-space then
      \begin{displaymath}
      \lin(X)\le {2^{\FF(X)\PSIC(X)}} (\le 2^{\FF(X)\chi(X)}).
      \end{displaymath}
      \end{corollary}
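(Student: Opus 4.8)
The plan is to combine Theorem \ref{tm:LsupLsmall} with a cardinality estimate for the closures of free sets. Theorem \ref{tm:LsupLsmall} gives $\lin(X)\le \FF(X)\cdot\mu(X)$, so it suffices to show that $\mu(X)\le 2^{\FF(X)\PSIC(X)}$, i.e.\ to bound $\lin(\overline S)$ by $2^{\FF(X)\PSIC(X)}$ for every $S\in\mc F(X)$.

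First I would discard $\lin$ altogether, using the trivial bound $\lin(\overline S)\le|\overline S|$, and then estimate $|\overline S|$ by means of the inequality $|Y|\le\varrho(Y)^{\PSIC(Y)}$ recalled above, applied to the $T_2$-subspace $Y=\overline S$. Two monotonicity facts feed into this. One is $\PSIC(\overline S)\le\PSIC(X)$: if a family of closed neighbourhoods of a point $p\in\overline S$ in $X$ has intersection $\{p\}$, then restricting it to $\overline S$ gives a family of closed neighbourhoods of $p$ in $\overline S$ with the same intersection. The other is $\varrho(\overline S)\le 2^{d(\overline S)}$: in any space a regular closed set $\overline U$ equals $\overline{U\cap D}$ for every dense $D$, so the regular closed sets are indexed by subsets of a fixed dense set. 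Since $S$ is dense in $\overline S$ we have $d(\overline S)\le|S|\le\FF(X)$, whence $|\overline S|\le\bigl(2^{d(\overline S)}\bigr)^{\PSIC(\overline S)}\le 2^{\FF(X)\PSIC(X)}$.

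Taking the supremum over $S\in\mc F(X)$ then gives $\mu(X)\le 2^{\FF(X)\PSIC(X)}$, and Theorem \ref{tm:LsupLsmall} yields $\lin(X)\le\FF(X)\cdot 2^{\FF(X)\PSIC(X)}=2^{\FF(X)\PSIC(X)}$, the last equality holding because $\FF(X)\le 2^{\FF(X)}\le 2^{\FF(X)\PSIC(X)}$. The parenthetical estimate is then immediate from $\PSIC(X)\le\chi(X)$. I do not expect a genuine obstacle here: once Theorem \ref{tm:LsupLsmall} and the bound $|Y|\le\varrho(Y)^{\PSIC(Y)}$ are in hand, the only points requiring a moment's care are the two monotonicity observations — that closed pseudocharacter does not increase on subspaces and that the number of regular closed sets is controlled by density.
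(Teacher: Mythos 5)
Your proposal is correct and follows essentially the same route as the paper: bound $\lin(\overline S)\le|\overline S|\le\varrho(\overline S)^{\PSIC(\overline S)}\le 2^{|S|\cdot\PSIC(X)}$ to get $\mu(X)\le 2^{\FF(X)\PSIC(X)}$, then apply Theorem \ref{tm:LsupLsmall}. The only difference is that you spell out the two monotonicity facts ($\PSIC(\overline S)\le\PSIC(X)$ and $\varrho(\overline S)\le 2^{d(\overline S)}$) that the paper leaves implicit.
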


\begin{proof}
For any $S \subs X$ we have $\varrho(\overline{S}) \le 2^{|S|}$, and so  by our previous remark
$$\lin(\overline{S}) \le |\overline{S}| \le
\varrho(\overline{S})^{ \PSIC(\overline{S})} \le 2^{|S|\cdot \PSIC(X)}.$$ It immediately follows then that $\mu(X) \le 2^{\FF(X)\PSIC(X)}$, hence
$$\lin(X)\le \FF(X)\cdot \mu(X) \le {2^{\FF(X)\PSIC(X)}}$$ by Theorem \ref{tm:LsupLsmall}.
\end{proof}

As a further corollary we obtain the following non-trivial upper bound for the size of a $T_2$-space space involving its  free set number.

\begin{corollary}\label{cr:XleFpsi}
If $X$ is any $T_2$-space then
\begin{displaymath}
|X|\le 2^{2^{\FF(X)\PSIC(X)}}.
\end{displaymath}
\end{corollary}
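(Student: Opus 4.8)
The plan is to combine the bound on $\lin(X)$ from Corollary \ref{cr:LXleFpsi} with the classical Pol--Šapirovskii type estimate $|X|\le \lin(X)^{\psi_c(X)}$, which holds for every $T_2$-space (this is exactly the inequality $|X|\le \varrho(X)^{\psi_c(X)}$ applied together with $\varrho(X)\le 2^{\lin(X)}$, since in a $T_2$-space every regular closed set is determined by the open cover it avoids — more directly, it is a standard fact that $|X|\le 2^{\lin(X)\cdot\psi_c(X)}$). Thus the whole argument is a two-line computation once the right ingredients are assembled.

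Concretely, first I would recall or cite the inequality $|X|\le 2^{\lin(X)\cdot\psi_c(X)}$ for $T_2$-spaces; this is the natural companion of the remark already made in the text that $|X|\le\varrho(X)^{\psi_c(X)}$, using $\varrho(X)\le 2^{\lin(X)}$ (a cover of size $\lin(X)$ separates regular closed sets, so there are at most $2^{\lin(X)}$ of them). Then I would substitute the bound $\lin(X)\le 2^{\FF(X)\PSIC(X)}$ from Corollary \ref{cr:LXleFpsi}. Writing $\theta=\FF(X)\cdot\PSIC(X)$ for brevity, this gives
\begin{displaymath}
|X|\le 2^{\lin(X)\cdot\psi_c(X)}\le 2^{\,2^{\theta}\cdot\theta}=2^{2^{\theta}},
\end{displaymath}
where the last equality uses $2^{\theta}\cdot\theta=2^{\theta}$ for infinite $\theta$. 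This is precisely the claimed bound $|X|\le 2^{2^{\FF(X)\PSIC(X)}}$.

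I do not expect a genuine obstacle here, since every hard step has already been done: Theorem \ref{tm:LsupLsmall} and Corollary \ref{cr:LXleFpsi} carry the combinatorial weight, and the passage from $\lin$ and $\psi_c$ to $|X|$ is classical. The only point requiring mild care is making sure the auxiliary inequality $|X|\le 2^{\lin(X)\cdot\psi_c(X)}$ is quoted in a form valid for all $T_2$-spaces (not just $T_3$ ones), which it is — the argument using closed neighborhoods and a Lindelöf-type closing-off works in $T_2$. If one prefers to stay self-contained, one can instead invoke the text's own remark $|X|\le\varrho(X)^{\psi_c(X)}$ and the elementary $\varrho(X)\le 2^{\lin(X)}$, then run the same exponent arithmetic; either route yields the corollary in a couple of lines.
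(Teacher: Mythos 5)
The reduction to Corollary \ref{cr:LXleFpsi} is the right instinct --- the paper itself points out that such a route exists --- but the auxiliary inequality you rely on, $|X|\le 2^{\lin(X)\cdot\PSIC(X)}$ for $T_2$-spaces, is not a theorem, and your justification of it is incorrect. First, the claim $\varrho(X)\le 2^{\lin(X)}$ is false: the Cantor cube $\{0,1\}^\kappa$ is compact, so $\lin=\omega$, yet it has at least $\kappa$ distinct clopen (hence regular closed) subsets, and $\kappa$ may exceed $2^\omega$. The correct bound, and the one the paper's proof of Corollary \ref{cr:LXleFpsi} actually uses, is $\varrho(\overline S)\le 2^{|S|}$ for $S$ dense in $\overline S$, i.e.\ $\varrho(Y)\le 2^{d(Y)}$: a regular closed set is determined by its trace on a dense set, not by any covering data. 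Second, and more seriously, the inequality $|X|\le 2^{\lin(X)\PSIC(X)}$ is itself consistently false even for $T_3$-spaces: by the Shelah--Gorelic constructions there are, consistently, Lindel\"of regular spaces with points $G_\delta$ (hence $\lin(X)=\PSIC(X)=\omega$, since $\PSIC=\psi$ in regular spaces) of cardinality greater than $2^\omega$. Some trace of tightness or of the free set number must appear in the exponent. The valid companion inequality is $|X|\le 2^{\FF(X)\psi(X)\lin(X)}$ (Juh\'asz; see \cite{SS}), and the paper remarks that this together with Corollary \ref{cr:LXleFpsi} does yield the corollary, since $\psi\le\PSIC$ and $\FF(X)\PSIC(X)\cdot 2^{\FF(X)\PSIC(X)}=2^{\FF(X)\PSIC(X)}$.

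For the record, the paper's primary proof is self-contained and different from this route: with $\kappa=\FF(X)\PSIC(X)$, it takes a $2^\kappa$-closed elementary submodel $M$ of size $2^{2^\kappa}$ containing $X$ and shows $X\subs M$ by applying Lemma \ref{lm:freeinB} to $A=M\cap X$ and $\mc W=\{W\in \tau(X)\cap M: p\notin W\}$ for a hypothetical $p\in X\setm M$, verifying condition (c) there by means of Corollary \ref{cr:LXleFpsi} applied to $\overline S$. If you want to keep your two-line structure, replace your auxiliary inequality by the cited $|X|\le 2^{\FF(X)\psi(X)\lin(X)}$; as it stands, the argument has a genuine gap.
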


\begin{proof}
Let us put  ${\kappa}={\FF(X)\PSIC(X)}$ and consider an elementary submodel
$M$ of $H(\vartheta)$ for a large enough regular cardinal $\vartheta$ such that $|M| = 2^{2^{\kappa}}$, $\,M$ is
$2^{\kappa}$-closed, and $X \in M$. We shall show that $X \subs M$.

Assume, on the contrary, that there is $p\in X\setm M$.
We may then apply Lemma \ref{lm:freeinB} for $A=M\cap X$ and
$\mc W=\{W\in \tau(X)\cap M: p\notin W\}$, but with $\kappa^+$ instead of $\kappa$.
Indeed, condition (a) of Lemma \ref{lm:freeinB}
holds because $M$ and hence $\mc W$ is even $2^{\kappa}$-closed.
Condition (b) holds by elementarity.

To see that (c) holds, first note that
if $S\in {[A]}^{{\kappa}}$ then $S \in M$ as $M$ is ${\kappa}$-closed,
and hence $\overline S \in M$ as well. But then  $|\overline{S}|\le 2^{2^\kappa}$
implies that $\overline{S} \subs A$. For every point $x \in \overline{S}$ we have
$\psi(x,X) \le 2^\kappa$, hence there is a family $\mc V_x \subs \tau(X)$ with $|\mc V_x| \le 2^\kappa$ and $\bigcap \mc V_x = \{x\}$.
By elementarity and the $2^{\kappa}$-closure of $M$ we may assume that $\mc V_x \subs M$. For each $x \in \overline{S}$ there is $W_x \in \mc V_x$
with $p \notin W_x$, hence $\mc W$ covers $\overline S$.
Lemma \ref{cr:LXleFpsi} and the $2^{\kappa}$-closure of $\mc W$ then imply that
there is $W\in \mc W$ with $\overline S\subs W$, thus (c) is indeed satisfied.

But applying Lemma \ref{lm:freeinB} we would obtain a free set in $X$ of size ${\kappa}^+ > \FF(X)$, a contradiction.
Consequently, we indeed have $X \subs M$ and so $|X| \le 2^{2^\kappa} = 2^{2^{\FF(X)\PSIC(X)}}$.
\end{proof}

An alternative way of proving Corollary \ref{cr:XleFpsi} is to use the following inequality due to the first author:
$$|X| \le 2^{\FF(X)\psi(X)\lin(X)}$$ for any $T_2$-space $X$, see \cite[Theorem 3.1.]{SS}.  Indeed, this together with
Corollary \ref{cr:LXleFpsi} immediately yields Corollary \ref{cr:XleFpsi}. We chose to give our above direct proof to
make our presentation self-contained.

\smallskip

Before presenting our next result we need to introduce some new notation.

\begin{definition}
Given a topological space $X$  and a subset $A\subs X$ we let
\begin{enumerate}[(a)]
\item $\mc F(A,X)=\{S\subs A:\text{$S$ is free in $X$}\}$,
\item ${\mu}(A,X)=\sup\{\lin(\overline {S}):S\in \mc F(A,X)\}$,
\item ${\nu}(A,X)=\sup\{|\overline{S}|:S\in \mc F(A,X)\}$.
\end{enumerate}
\end{definition}

\begin{theorem}\label{tm:largepsi}
Assume that $X$ is a $T_1$ space and  $A\subs X$ is its subspace.
If $\lambda$ is a cardinal satisfying the following two conditions:
\begin{enumerate}[(i)]
\item ${\nu}(A,X)\le {\lambda}={\lambda}^{\FF(X)\cdot {\mu(X)}}$,
\item $\forall x\in X$  $\psi(x,A\cup\{x\})\le {\lambda}$,
\end{enumerate}
then $|A|\le {\lambda}$.
\end{theorem}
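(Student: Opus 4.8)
The plan is to mimic the elementary submodel argument used in the proof of Corollary \ref{cr:XleFpsi}, but now building the submodel around the subspace $A$ rather than $X$ itself. First I would set $\kappa = \FF(X)\cdot\mu(X)$ and take an elementary submodel $M \prec H(\vartheta)$ for suitably large regular $\vartheta$ with $X, A \in M$, $|M| = \lambda$, and — crucially — $M$ closed under sequences of length $\kappa$ (this is possible since $\lambda = \lambda^{\FF(X)\cdot\mu(X)} = \lambda^\kappa \ge \lambda$). The goal is to show $A \subs M$, which immediately gives $|A| \le |M| = \lambda$.

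So suppose toward a contradiction that there is $p \in A \setm M$. I would then invoke Lemma \ref{lm:freeinB} with the choices $A$ (the same $A$), $\mc W = \{W \in \tau(X) \cap M : p \notin W\}$, and the cardinal $\kappa^+$ — note $\kappa^+ \le \lambda^+$ and in fact $\FF(X) \le \kappa$, so producing a free subset of $A$ of size $\kappa^+$ is already a contradiction since $\FF(X) \le \kappa < \kappa^+$. Condition (a) of the Lemma holds because $M$, hence $\mc W$, is $\kappa$-closed (unions of $<\kappa^+$, i.e. $\le\kappa$, many members of $\mc W$ lie in $M$ and omit $p$). Condition (b) holds by elementarity, since $X \notin \overline{\{p\}}^{\,c}$ is the wrong phrasing — rather, for each $W \in \mc W$ we have $p \notin W$ but $W \in M$, and $W \cap A \ne W$ would fail only if... actually (b) asks $A \setm W \ne \empt$; since $p \in A \setm W$ but $p \notin M$ this needs elementarity: $M$ sees that $W$ is a proper open subset missing some point of $A$ — here one uses that $A \setm W \ne \empt$ is witnessed in $V$ by $p$, hence by elementarity $M$ contains such a witness, so $A \setm W \ne \empt$ holds with a witness in $M$; but actually for Lemma \ref{lm:freeinB} we only need $A \setm W \ne \empt$ to hold, and $p$ itself witnesses this, so (b) is immediate without elementarity.

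The heart of the matter, exactly as before, is condition (c): given $S \subs A$ with $S \in \mc F(X)$ and $|S| \le \kappa$, we must find $W \in \mc W$ with $\overline{S} \subs W$. Since $|S| \le \kappa$ and $M$ is $\kappa$-closed, $S \in M$, hence $\overline{S} \in M$. Now $\lin(\overline{S}) \le \mu(X) \le \kappa$ by definition of $\mu(X)$, and $|\overline{S}| \le \nu(A,X) \le \lambda$ by hypothesis (i), so $\overline{S} \subs M$ as $M \cap H(\vartheta)$ contains every subset of $M$ of size $\le\lambda$ that lies in $M$ — more carefully, $\overline{S} \in M$ together with $|\overline{S}| \le \lambda = |M|$ and some standard closure argument puts $\overline{S} \subs M$; in particular $p \notin \overline{S}$ since $p \notin M$. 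For each $x \in \overline{S}$, hypothesis (ii) gives $\psi(x, A \cup \{x\}) \le \lambda$, so there is a family $\mc V_x \subs \tau(X)$ with $|\mc V_x| \le \lambda$ and $\bigcap\{V \cap (A\cup\{x\}) : V \in \mc V_x\} = \{x\}$; by elementarity and $|M|=\lambda$ we may take $\mc V_x \subs M$ (for $x \in M$), and pick $W_x \in \mc V_x$ with $p \notin W_x$ — possible since $p \ne x$ and $p \in A$ so $p \in A \cup \{x\}$, hence $p$ is separated from $x$ by some member. Then $\{W_x : x \in \overline S\} \subs \mc W$ covers $\overline S$ and has size $\le |\overline S| \le \mu(X)\cdot(\text{something}) \le \kappa$ — wait, $|\overline S|$ could be as large as $\lambda$; instead one uses $\lin(\overline S) \le \mu(X) \le \kappa$ to extract a subcover of size $\le \kappa$, and then the $\kappa$-closure of $\mc W$ (condition (a)) yields a single $W \in \mc W$ with $\overline S \subs W$. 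This establishes (c), Lemma \ref{lm:freeinB} delivers a free subset of $A$ of size $\kappa^+ > \FF(X)$, the desired contradiction. The one technical point to be careful about is ensuring $\overline S \subs M$ (not merely $\overline S \in M$), which is where the exponent $\lambda^{\FF(X)\mu(X)} = \lambda$ and the bound $\nu(A,X) \le \lambda$ are both used; I expect this bookkeeping with the submodel closure to be the main thing to get right.
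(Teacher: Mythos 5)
Your overall architecture is exactly the paper's: a $\kappa$-closed elementary submodel $M$ of size $\lambda$ with $\kappa=\FF(X)\cdot\mu(X)$, the family $\mc W=\{W\in\tau(X)\cap M: p\notin W\}$, and Lemma \ref{lm:freeinB} producing a free set of size $\kappa^+>\FF(X)$. But one step fails as written: you apply Lemma \ref{lm:freeinB} to the set $A$ itself, and then verify condition (c) only for free sets $S$ that happen to lie inside $M$. Condition (c) quantifies over \emph{all} free $S\subs A$ with $|S|\le\kappa$, and for such an $S$ not contained in $M$ the inference ``$|S|\le\kappa$ and $M$ is $\kappa$-closed, hence $S\in M$'' is simply false: $\kappa$-closure only puts into $M$ those $\le\kappa$-sized sets all of whose \emph{elements} already lie in $M$. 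This matters inside the recursion of Lemma \ref{lm:freeinB}, where the points $x_\alpha$ are picked from $A\setm\bigcup_{\beta\le\alpha}W_\beta$ and may well land outside $M$, after which the covering argument for the next $W_\alpha$ is unavailable. The repair (and the paper's choice) is to apply the lemma to $A\cap M$ rather than $A$; a free subset of $A\cap M$ of size $\kappa^+$ is still free in $X$, so the contradiction survives. Note that with this necessary change your lengthy discussion of condition (b) settles on the wrong conclusion: for $A\cap M$ you \emph{do} need elementarity, since the witness $p\in A\setm W$ lies outside $M$, and one must use $W,A\in M$ together with $A\setm W\ne\empt$ to produce a witness inside $M$.

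Two smaller points of bookkeeping. To get $\overline{S}\subs M$ from $\overline{S}\in M$ and $|\overline{S}|\le\nu(A,X)\le\lambda$, and likewise $\mc V_x\subs M$ from $\mc V_x\in M$ and $|\mc V_x|\le\lambda$, you should build $M$ with $\lambda\in M$ and $\lambda\subs M$, not merely $|M|=\lambda$; this is the ``standard closure argument'' you allude to, and it costs nothing. With these repairs your argument coincides with the paper's proof.
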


\begin{proof}
Let us put $\kappa = \FF(X)\cdot {\mu(X)}$ and
let $\mc M$ be a $\lambda$-sized and $\kappa$-closed elementary submodel of
$H(\vartheta)$ for a large enough regular cardinal
$\vartheta$  with $X,A, \lambda \in \mc M$  and  ${\lambda}\subs \mc M$.
We shall show that $A \subs \mc M$. Assume otherwise, then working towards a contradiction
we may fix a point $y\in A\setm \mc M$.
Let us then put $$\mc W=\{W\in \tau(X)\cap \mc M : y \notin W \}.$$

By condition (ii), for each point $x\in X\cap \mc M$ we may fix a family
$\mc U_x \subs \tau(X)$ with $\mc U_x \in \mc M$ and $|\mc U_x| \le {\lambda}$
such that $x\in \bigcap \mc U_x$ and $A\cap \bigcap \mc U_x \subs \{x\}$.
Since ${\lambda}\subs \mc M$, we then also have $\mc U_x\subs \mc M$.

Now, it is straightforward to check that we may apply Lemma \ref{lm:freeinB}
with the parameters $X,\,A \cap \mc M,\, \mc W,$ and $\kappa^+$.

Indeed, conditions (a) and (b) of Lemma \ref{lm:freeinB} hold trivially.
To check (c), consider an arbitrary set $S\subs A \cap \mc M$
that is free in $X$.
Then $S \in \mc M$ and so $\overline S\in \mc M$ as well, moreover
by $|\overline S|\le {\lambda}$ we also have $\overline S \subs \mc M$.
For each $a\in \overline S$ we can pick $U_a\in \mc U_a$ such that $y\notin U_a$.
Then $\mc V=\{U_a:a\in \overline S\}\subs \mc W$ covers $\overline S$.
But $\lin(\overline S)\le {\mu(X)}$, hence
there is $\mc V'\subs \mc V$ with  $|\mc V'|\le \mu(X)$ which also covers $\overline S$.
Then  $\mc V'\in \mc M$ because $\mc M$ is
${\mu}(X)$-closed. Thus $W=\bigcup \mc V'\in \mc W$ with $\overline{S}\subs W$,
showing that condition (c) holds as well.

But then by Lemma \ref{lm:freeinB} there is
a free set in $X$ of cardinality ${\kappa}^+ > \FF(X)$, a contradiction.
Thus, indeed, we have $A \subs \mc M$.
\end{proof}

Let us note that in the case when $A = X$ condition (ii) of Theorem \ref{tm:largepsi}
simply says that $\psi(X) \le \lambda$, and the conclusion is that $|X| \le \lambda$,
hence these two inequalities are equivalent. But if $X$ is $T_3$ then
on one hand from part (ii) of Corollary \ref{cr:Lle22F} we have $\mu(X) \le 2^{\FF(X)}$,
while on the other hand we trivially have $\nu(X) \le 2^{2^{\FF(X)}}$.
Thus, in this case we may apply our theorem with the choice
$\lambda = 2^{2^{\FF(X)}}$ to conclude the following somewhat surprising result.

\begin{corollary}\label{cr:T3psi}
For any $T_3$-space $X$ we have $$ \psi(X) \le 2^{2^{\FF(X)}}\,\, \Leftrightarrow \,\, |X| \le 2^{2^{\FF(X)}}.$$
\end{corollary}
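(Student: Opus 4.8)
The plan is to obtain the non-trivial implication as an immediate application of Theorem \ref{tm:largepsi} with the choices $A = X$ and $\lambda = 2^{2^{\FF(X)}}$, the reverse implication being essentially free.

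First, the direction $|X| \le 2^{2^{\FF(X)}} \Rightarrow \psi(X) \le 2^{2^{\FF(X)}}$ needs nothing beyond the trivial inequality $\psi(X) \le |X|$, which holds in every $T_1$-space. So all the work is in the converse. Assume then that $\psi(X) \le 2^{2^{\FF(X)}}$, and put $\lambda = 2^{2^{\FF(X)}}$ and $\kappa = \FF(X)\cdot\mu(X)$. I would check that the two hypotheses of Theorem \ref{tm:largepsi} hold for the (improper) subspace $A = X$. Hypothesis (ii) reads $\psi(x,X) \le \lambda$ for every $x \in X$, i.e. exactly our standing assumption $\psi(X)\le\lambda$. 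Hypothesis (i) demands $\nu(X) \le \lambda = \lambda^{\kappa}$. The first inequality $\nu(X) \le \lambda$ is the remark recorded right after the definition of $\nu$, valid for any $T_2$-space. For the equality $\lambda^{\kappa} = \lambda$ I would invoke the observation preceding the corollary (the $T_3$ refinement behind Corollary \ref{cr:Lle22F}(ii)), namely that $\mu(X) \le 2^{\FF(X)}$ whenever $X$ is $T_3$; hence $\kappa = \FF(X)\cdot\mu(X) \le 2^{\FF(X)}$ and therefore
\[
\lambda^{\kappa} \le \left(2^{2^{\FF(X)}}\right)^{2^{\FF(X)}} = 2^{\,2^{\FF(X)}\cdot 2^{\FF(X)}} = 2^{2^{\FF(X)}} = \lambda,
\]
while $\lambda \le \lambda^{\kappa}$ is trivial since $\kappa$ is infinite.

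With both hypotheses verified, Theorem \ref{tm:largepsi} yields $|X| = |A| \le \lambda = 2^{2^{\FF(X)}}$, which is the desired conclusion. There is no real obstacle here; the one thing worth emphasizing is that the $T_3$ hypothesis enters in two distinct ways — substantively through $\mu(X) \le 2^{\FF(X)}$, which is precisely what makes the cardinal arithmetic in condition (i) collapse to $\lambda^{\kappa} = \lambda$ (for a merely $T_2$ space one would only get $\mu(X) \le 2^{2^{\FF(X)}}$ and the exponentiation would not stay bounded by $\lambda$), and trivially through $T_3 \Rightarrow T_1$, which is what allows Theorem \ref{tm:largepsi} to be applied at all.
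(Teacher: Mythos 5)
Your proof is correct and follows essentially the same route as the paper: both obtain the nontrivial direction by applying Theorem \ref{tm:largepsi} with $A = X$ and $\lambda = 2^{2^{\FF(X)}}$, using $\nu(X) \le 2^{2^{\FF(X)}}$ for $T_2$-spaces and the $T_3$ bound $\mu(X)\le 2^{\FF(X)}$ to verify $\lambda = \lambda^{\FF(X)\cdot\mu(X)}$, with the converse direction being the trivial $\psi(X)\le|X|$. Nothing is missing.
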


The real significance of Theorem \ref{tm:largepsi}, however, will become apparent in the next section.

\section{The free set number of the $G_\delta$-modification}

The $G_\delta$-modification $X_\delta$ of a topological space $X$ is the space on the same underlying set
generated by, i.e. having as a basis, the collection of all $G_\delta$ subsets of $X$.
The aim of this section is to give estimates of $\FF(X_\delta)$ in terms of $\FF(X)$.

Let us recall before giving our next result that $p$ is a {\em complete accumulation point (CAP)}
of a subset $A$ of a space $X$ if for every neighborhood $U$ of $p$ in $X$ we have $|U \cap A| = |A|$.
We shall denote by $A^\circ$ the set of all CAPs of $A$.

\begin{lemma}\label{lm:circ}
\begin{enumerate}[(i)]
\item Let $X$ be any space and $\varrho > \mu(X)$ be a regular cardinal.
Then $A^\circ \ne \emptyset$ for every set $A \in [X]^\varrho$.

\smallskip

\item Let $X$ be a $T_2$-space, $\varrho > 2^{2^{\FF(X)}}$ be a regular cardinal, and
$s = \langle x_\alpha : \alpha < \varrho \rangle$
be a one-one $\varrho$-sequence in $X$ such that
$$\{x_\alpha : \alpha < \varrho\}^\circ = \{x\}$$ for some $x \in X$.
Then $s$ is {\em not a free sequence} in $X_\delta$.
\end{enumerate}
\end{lemma}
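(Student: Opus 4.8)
The plan is to show that if $s$ were a free sequence in $X_\delta$, then its closure in $X_\delta$ would have small Lindelöf degree, which will contradict the fact that $x$ is a CAP of the whole range of $s$ together with the large cofinality $\varrho$. First I would observe that the range $R = \{x_\alpha : \alpha < \varrho\}$ is free in $X$ as well (a free sequence in the finer space $X_\delta$ is a fortiori free in $X$, since closures in $X_\delta$ are contained in closures in $X$), so $\lin(\overline{R}^X) \le \mu(X) \le 2^{2^{\FF(X)}}$ and in fact $|\overline{R}^X| \le \nu(X) \le 2^{2^{\FF(X)}}$. Since $\varrho$ is regular and strictly larger than $2^{2^{\FF(X)}}$, and the closure of $R$ in $X_\delta$ sits inside the closure of $R$ in $X$, we get $|\overline{R}^{X_\delta}| \le |\overline{R}^X| < \varrho$.

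Next I would use the hypothesis that $x = R^\circ$, i.e. $x$ is the unique complete accumulation point of $R$ in $X$. The point is to locate, for a free sequence, a ``splitting index'' $\beta$ such that both $\overline{\{x_\alpha : \alpha < \beta\}}$ and $\overline{\{x_\alpha : \alpha \ge \beta\}}$ are ``large'' initial and final pieces — but by regularity of $\varrho$ one of the two pieces of the split of $\varrho$ at $\beta$ has size $< \varrho$ and the other has size $\varrho$. Concretely: for each $\beta < \varrho$, the set $\{x_\alpha : \alpha < \beta\}$ has size $< \varrho = |R|$, so its closure in $X$ cannot contain the CAP $x$; hence $x \in \overline{\{x_\alpha : \alpha \ge \beta\}}$ for every $\beta < \varrho$. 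If $s$ were free in $X_\delta$, then for the index $\beta$ we would need $\overline{\{x_\alpha : \alpha < \beta\}}^{X_\delta} \cap \overline{\{x_\alpha : \alpha \ge \beta\}}^{X_\delta} = \emptyset$. Taking the final segment's closure in $X_\delta$: it must still contain $x$ — here I would need to check that $x$ remains in the $X_\delta$-closure of every final segment. This is the crux: a neighborhood of $x$ in $X_\delta$ is a $G_\delta$ of $X$, i.e. an intersection $\bigcap_{n} U_n$ of $X$-open sets containing $x$, and I must show it meets every final segment $\{x_\alpha : \alpha \ge \beta\}$ in a set of full size — or at least nonempty.

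The hard part will be controlling the $G_\delta$-neighborhoods of $x$. The natural approach is a pressing-down / counting argument: suppose $G = \bigcap_n U_n$ is a $G_\delta$ neighborhood of $x$ that misses a final segment, or meets every final segment only in a set of size $< \varrho$. For each $\alpha$ with $x_\alpha \notin G$ pick $n(\alpha)$ with $x_\alpha \notin U_{n(\alpha)}$; by regularity of $\varrho$ one $n$ is chosen $\varrho$-often, so $U_n$ is an $X$-open neighborhood of $x$ omitting $\varrho$-many points of $R$, contradicting that $x$ is a CAP of $R$ in $X$ (which demands $|U_n \cap R| = \varrho$, equivalently $|R \setminus U_n| < \varrho$ since $\varrho$ is regular). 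This shows $G \cap R$ has size $\varrho$ for every $G_\delta$-neighborhood $G$ of $x$, so in particular $x$ is a CAP — hence certainly a point of the $X_\delta$-closure — of every final segment of $s$. Combined with the earlier observation that every final segment's $X$-closure (a fortiori its $X_\delta$-closure is contained in it, but we need the reverse inclusion of the point $x$, which we just established directly) contains $x$, we conclude that at the splitting index $\beta$ both $\overline{\{x_\alpha : \alpha < \beta\}}^{X_\delta}$ and $\overline{\{x_\alpha : \alpha \ge \beta\}}^{X_\delta}$ — wait, only the final one is forced to contain $x$ this way. To get the contradiction cleanly I would instead argue: pick any $\beta < \varrho$; then $x \in \overline{\{x_\alpha : \alpha \ge \gamma\}}^{X_\delta}$ for all $\gamma$, so $x$ lies in the intersection $\bigcap_{\gamma < \varrho} \overline{\{x_\alpha : \alpha \ge \gamma\}}^{X_\delta}$, which for a free sequence of length $\varrho$ (with $\varrho$ regular) would have to be empty — since for each $\gamma$ a free sequence gives $\overline{\text{(initial up to }\gamma)} \cap \overline{\text{(final from }\gamma)} = \emptyset$, and as $\gamma$ ranges over $\varrho$ every point of the closure of $R$ lands eventually in some initial segment's closure (using $|\overline{R}^{X_\delta}| < \varrho = \mathrm{cf}(\varrho)$, each point is in $\overline{\{x_\alpha : \alpha < \gamma\}}$ for some $\gamma$). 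That is the desired contradiction, so $s$ is not free in $X_\delta$.
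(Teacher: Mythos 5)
Your argument addresses only part (ii) and, unfortunately, its very first step is backwards: since closures in $X_\delta$ are \emph{contained} in closures in $X$, it is freeness in $X$ that implies freeness in $X_\delta$, not the converse. In fact the range $R=\{x_\alpha:\alpha<\varrho\}$ of a free sequence in $X_\delta$ cannot be free in $X$ here, since that would give $\FF(X)\ge\varrho>2^{2^{\FF(X)}}$. So the bounds $\lin(\overline{R}^X)\le\mu(X)$ and $|\overline{R}^{X}|\le\nu(X)<\varrho$ are unavailable; indeed $|\overline{R}^{X_\delta}|<\varrho$ is outright impossible because $R\subseteq\overline{R}^{X_\delta}$ and $|R|=\varrho$. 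A second, smaller error: ``$|U_n\cap R|=\varrho$, equivalently $|R\setminus U_n|<\varrho$'' is not an equivalence ($R$ may split into two pieces of size $\varrho$). The true statement --- every open $U\ni x$ contains all but $<\varrho$ points of $R$ --- does hold, but it needs the \emph{uniqueness} of the complete accumulation point together with part (i): if $|R\setminus U|=\varrho$ then $R\setminus U$ has a CAP by (i), which is a CAP of $R$ lying outside $U$, contradicting $R^\circ=\{x\}$. This is exactly how the paper shows that $s$ converges to $x$, and then (by a pressing-down argument like yours) that $x$ lies in the $G_\delta$-closure of every final segment.

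The deeper problem is your closing contradiction. Knowing only that $x\in\overline{\{x_\alpha:\alpha\ge\gamma\}}^{X_\delta}$ for all $\gamma<\varrho$ does \emph{not} contradict freeness: a free sequence may converge (e.g.\ $\langle n:n<\omega\rangle$ in $\omega+1$), in which case $\bigcap_\gamma\overline{\{x_\alpha:\alpha\ge\gamma\}}$ is nonempty while no initial-segment closure contains the limit. Your attempt to force $x$ into some initial segment's closure rests on the false cardinality bound from the first step, so it collapses. The missing idea --- the heart of the paper's proof --- is to produce a \emph{small} set $B\in[R]^{\FF(X)^+}$ with $x\in\overline{B}^{\delta}$: apply Lemma \ref{lm:freeinB} inside the subspace $X\setminus\{x\}$ (with $\mc W=\{W\in\tau(X):|W\cap R|<\varrho\}$) to get $B\subseteq R$ of size $\FF(X)^+$ that is free in $X\setminus\{x\}$; since $B\setminus U$ is free in $X$ for every open $U\ni x$, we get $|B\setminus U|\le\FF(X)$, hence every $G_\delta$-set containing $x$ meets $B$. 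As $|B|=\FF(X)^+<\varrho$, the set $B$ sits in a proper initial segment, and $x$ then witnesses the failure of freeness at that splitting index. Without some substitute for this step (and without any treatment of part (i)), the proposal does not prove the lemma.
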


\begin{proof}
(i) If $A^\circ$ would be empty then we could apply Theorem \ref{lm:freeinB} with parameters
$X,\, A,\, \mc W,\,$ and $\FF(X)^+$, where $\mc W = \{W \in \tau(X) : |W \cap A| < \varrho\}$,
to obtain a subset of $A$ of size $\FF(X)^+$ that is free in $X$, a contradiction.

\smallskip

(ii) Since $X$ is $T_2$ we now have $\mu(X) \le 2^{2^{\FF(X)}} < \varrho$, hence by part (i)
the assumption $\{x_\alpha : \alpha < \varrho\}^\circ = \{x\}$ implies that
the sequence $s$ actually converges to the point $x$. Clearly, then $s$ also converges to $x$ in $X_\delta$.

Let us now put $A = \{x_\alpha : \alpha < \varrho\}$ and $\mc W = \{W \in \tau(X) : |W \cap A| < \varrho\}$,
as in part (i). We may then apply Theorem \ref{lm:freeinB} again, this time with the parameters
$X \setm \{x\},\, A,\, \mc W,\,$ and $\FF(X)^+$, to obtain a subset  $B$ of $A$ of size $\FF(X)^+$ that is free
not in $X$ but in $X \setm \{x\}$. However, then for every open neighbourhood $U$ of $x$ we must have
$|B \setm U| \le \FF(X)$ because $B \setm U$ is already a free set in $X$. This clearly also implies that
$|B \setm H| \le \FF(X)$, hence $B \cap H \ne \emptyset$ for any $G_\delta$-set $H$ containing $x$,
consequently $x \in \overline{B}^\delta$, the $G_\delta$-closure of $B$. But $|B| = \FF(X)^+ < \varrho$ then
implies that $B$ is included in a proper initial segment of $s$, while $x$ is in the
the $G_\delta$-closure of all final segments of $s$, consequently $x$ witnesses that
$s$ is not a free sequence in $X_\delta$.
\end{proof}

Our next result is about half-way towards our goal, a general estimate of the
free set number $\FF(X_\delta)$ in terms of $\FF(X)$ for any $T_2$-space $X$.

\begin{theorem}\label{tm:plus}
Let $X$ be a $T_2$-space and $\lambda$ be a cardinal such that $|X| = \lambda^+$ and
$${\lambda}={\lambda}^{\FF(X)\cdot {\mu(X)}} \ge  2^{2^{\FF(X)}}.$$
Then $\FF(X_\delta) \le \lambda$.
\end{theorem}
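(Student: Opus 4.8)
The plan is to argue by contradiction: assume there is a free set $S$ in $X_\delta$ with $|S| = \lambda^+ = |X|$, fix a well-ordering $s = \langle x_\alpha : \alpha < \lambda^+\rangle$ of $S$ witnessing that it is a free sequence in $X_\delta$, and derive a contradiction using Lemma~\ref{lm:circ}(ii). Since the cardinality hypothesis gives $\lambda \ge 2^{2^{\FF(X)}}$, the cardinal $\varrho = \lambda^+$ is a regular cardinal exceeding $2^{2^{\FF(X)}}$, so Lemma~\ref{lm:circ}(ii) will apply the moment we can produce a point $x$ with $S^\circ = \{x\}$ relative to some suitable $\varrho$-indexing. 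The natural way to get such a singleton CAP set is to first pass to a subsequence of $s$ of full length $\lambda^+$ whose range has a \emph{unique} complete accumulation point in $X$.

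The first main step is therefore to show that some $A \in [S]^{\lambda^+}$ satisfies $|A^\circ| = 1$. By Lemma~\ref{lm:circ}(i), since $\lambda^+ > \mu(X)$ is regular, every subset of $X$ of size $\lambda^+$ has at least one CAP; the task is to cut down to get exactly one. Here I would invoke the hypothesis $\lambda = \lambda^{\FF(X)\cdot\mu(X)} \ge 2^{2^{\FF(X)}}$ together with Theorem~\ref{tm:largepsi} or Corollary~\ref{cr:XleFpsi}-style counting to bound $|X|$-type quantities — but in fact the cleanest route is: the set $S^\circ$ of CAPs of $S$ is itself small, indeed since $S$ is free in $X_\delta$ one expects $S^\circ$ to be a closed discrete-like set, and using that $|X| = \lambda^+$ one splits $S$ into $\le \lambda$ many pieces according to which "region" around a CAP they accumulate to, so by regularity of $\lambda^+$ one piece $A$ still has size $\lambda^+$ and, by shrinking once more, has a single CAP $x$. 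An alternative and perhaps more robust approach: build a decreasing $\lambda^+$-chain of subsets of $S$, each of size $\lambda^+$, splitting off "half" the accumulation at each stage; since $\mu(X) \le \lambda$ the relevant neighbourhood traces stabilize, and the intersection pins down one CAP.

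Once we have $A = \{x_{\alpha_\xi} : \xi < \lambda^+\} \subseteq S$ with $A^\circ = \{x\}$, re-index $A$ as a one-one $\lambda^+$-sequence $t$; by Lemma~\ref{lm:circ}(ii) with $\varrho = \lambda^+$, the sequence $t$ is \emph{not} a free sequence in $X_\delta$. The final step is to convert this into a contradiction with $s$ being free in $X_\delta$: any subsequence of a free sequence is again a free sequence (the $G_\delta$-closures of initial/final segments of the subsequence are contained in those of suitable segments of the original), so $t$, being the subsequence of $s$ indexed by the increasing enumeration $\langle \alpha_\xi : \xi < \lambda^+\rangle$, \emph{must} be a free sequence in $X_\delta$ — contradiction. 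Hence no free set in $X_\delta$ can have size $\lambda^+$, i.e. $\FF(X_\delta) \le \lambda$.

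The step I expect to be the main obstacle is the middle one: producing a length-$\lambda^+$ subsequence of $s$ whose range has a \emph{unique} complete accumulation point. The existence of \emph{some} CAP is immediate from Lemma~\ref{lm:circ}(i), and the freeness-inheritance and Lemma~\ref{lm:circ}(ii) application are routine; but uniqueness requires genuinely using the arithmetic hypothesis $\lambda = \lambda^{\FF(X)\cdot\mu(X)}$ (to ensure an elementary submodel or closing-off argument of size $\lambda$ sees enough of the relevant open sets and closures) and the $T_2$ bound $\mu(X) \le 2^{2^{\FF(X)}} \le \lambda$ (to control how the sequence accumulates). I would handle this via a $\kappa$-closed elementary submodel $\mathcal M$ of size $\lambda$ with $\kappa = \FF(X)\cdot\mu(X)$, much as in the proof of Theorem~\ref{tm:largepsi}: the point outside $\mathcal M$ — or rather the structure of $X \setminus \mathcal M$ — forces the accumulation to concentrate at a single point.
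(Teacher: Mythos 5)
Your overall skeleton matches the paper's: show that no one-one $\lambda^+$-sequence $s$ in $X$ is free in $X_\delta$ by extracting a $\lambda^+$-sized subset $B$ of its range with a unique complete accumulation point and then invoking Lemma \ref{lm:circ}(ii) (together with the fact that subsequences of free sequences are free). However, the step you yourself flag as the main obstacle --- producing $B$ with $|B^\circ|=1$ --- is exactly where your proposal has a genuine gap: neither of your two suggestions amounts to an argument. Splitting $S$ into ``$\le\lambda$ many pieces according to which region around a CAP they accumulate to'' presupposes both that $S^\circ$ is small and that its points can be separated by such regions, neither of which you establish (a priori $|S^\circ|$ could be $\lambda^+$). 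And a decreasing $\lambda^+$-chain of $\lambda^+$-sized subsets of $S$ need not have a large, or even nonempty, intersection (consider $S_\alpha=\lambda^+\setminus\alpha$), so ``the intersection pins down one CAP'' does not follow.

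The paper closes this gap as follows, and this is where the hypothesis $\lambda=\lambda^{\FF(X)\cdot\mu(X)}$ is actually used. Apply Theorem \ref{tm:largepsi} in contrapositive form to $A=\{x_\alpha:\alpha<\lambda^+\}$: since $\nu(A,X)\le\nu(X)\le 2^{2^{\FF(X)}}\le\lambda=\lambda^{\FF(X)\cdot\mu(X)}$ but $|A|=\lambda^+>\lambda$, condition (ii) of that theorem must fail, so there is a point $x$ with $\psi(x,A\cup\{x\})=\lambda^+$. Hence also $\psi_c(x,X)=\lambda^+$, and one may fix open neighbourhoods $W_\xi$ of $x$, $\xi<\lambda^+$, with $\bigcap_{\xi}\overline{W_\xi}=\{x\}$. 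Now recurse: having chosen $b_\eta,U_\eta,V_\eta$ for $\eta<\xi$, use $\psi(x,A\cup\{x\})=\lambda^+$ to pick $b_\xi\in A\cap\bigcap_{\eta<\xi}V_\eta\setminus\{x\}$, and use Hausdorffness to choose disjoint open sets $U_\xi\ni b_\xi$ and $V_\xi\ni x$ with $V_\xi\subseteq W_\xi$. Then $B=\{b_\xi:\xi<\lambda^+\}$ satisfies $|B\setminus\overline{W_\eta}|\le|\eta|+1<\lambda^+$ for every $\eta$, whence $B^\circ\subseteq\bigcap_{\eta}\overline{W_\eta}=\{x\}$; and $B^\circ\ne\emptyset$ by Lemma \ref{lm:circ}(i) since $\lambda^+>2^{2^{\FF(X)}}\ge\mu(X)$. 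This recursion is the concrete mechanism that ``forces the accumulation to concentrate at a single point''; without it your proof is incomplete.
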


\begin{proof}
Let us consider an arbitrary one-one sequence $s = \langle x_\alpha : \alpha < \lambda^+ \rangle$
of length $\lambda^+$ in $X$, we shall show that $s$ is not free in $X_\delta$. Let us
put $A = \{x_\alpha : \alpha < \lambda^+\}$, we may then apply
Theorem \ref{tm:largepsi} to obtain a point $x \in X$ such that $\psi(x,A\cup\{x\}) = {\lambda}^+$.
Since $\psi_c(x,X) \le |X| = \lambda^+ $ holds trivially, we then have $\psi_c(x,X) = \lambda^+$
as well, so we may fix open neighbourhoods $\{W_\xi : \xi < \lambda^+\}$ of $x$ with
$$\bigcap \{\overline{W_\xi} : \xi < \lambda^+\} = \{x\}.$$

This allows us to obtain a set $B \in [A]^{\lambda^+}$ such that $B^\circ = \{x\}$.
Indeed, by transfinite recursion on $\xi < \lambda^+$ we shall pick
distinct points $b_\xi \in A \setm \{x\}$ and open neighbourhoods $U_\xi$ of $b_\xi$ and $V_\xi \subs W_\xi$ of $x$
with $U_\xi \cap V_\xi = \emptyset$ as follows.
If $\{b_\eta : \eta < \xi\}$ have been chosen then we use $\psi(x,A\cup\{x\}) = {\lambda}^+$ to pick the point
$b_\xi \in A \cap \bigcap \{V_\eta : \eta < \xi\} \setm \{x\}$ and then, using that $X$ is $T_2,$ we can easily choose $U_\xi$ and $V_\xi$
appropriately. Let us put $B = \{b_\xi : \xi < \lambda^+\}$, then
it's obvious from the construction that $B^\circ \subs \{x\}$. But then by $\lambda^+ > 2^{2^{\FF(X)}}$
and part (i) of Lemma \ref{lm:circ} $B^\circ \ne \emptyset$, hence we actually have $B^\circ = \{x\}$.
But then by part (ii) of Lemma \ref{lm:circ} the cofinal subsequence of $s$ with range $B$ is not free in
$X_\delta$, and so neither is $s$.
\end{proof}

We are now ready to present the main result of this section.

\begin{theorem}\label{tm:ultimate-thm}
For every $T_2$-space $X$ we have
\begin{displaymath}
\FF(X_{\delta})\le \lambda(X) = 2^{\mu(X) \cdot 2^{\FF(X)}} \cdot {\nu}(X)^{{\mu}(X) \cdot \FF(X)^+}.
\end{displaymath}
\end{theorem}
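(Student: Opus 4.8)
The plan is to reduce Theorem \ref{tm:ultimate-thm} to Theorem \ref{tm:plus} by a standard ``elementary submodel pinching'' argument.  Set $\lambda = \lambda(X) = 2^{\mu(X)\cdot 2^{\FF(X)}}\cdot \nu(X)^{\mu(X)\cdot\FF(X)^+}$ and first observe the two basic properties of $\lambda$ that will be needed: $\lambda \ge 2^{2^{\FF(X)}}$ (since $\mu(X)\cdot 2^{\FF(X)} \ge 2^{\FF(X)}$), and $\lambda^{\FF(X)\cdot\mu(X)} = \lambda$.  For the latter one checks that each factor is stable under raising to the power $\FF(X)\cdot\mu(X)$: $\bigl(2^{\mu(X)\cdot 2^{\FF(X)}}\bigr)^{\FF(X)\cdot\mu(X)} = 2^{\mu(X)\cdot 2^{\FF(X)}}$ because $\FF(X)\cdot\mu(X) \le 2^{\FF(X)}\cdot\mu(X)$ when $X$ is $T_2$ (as $\mu(X)\le 2^{2^{\FF(X)}}$ and $\FF(X)\le 2^{\FF(X)}$, so the exponent is absorbed), and $\bigl(\nu(X)^{\mu(X)\cdot\FF(X)^+}\bigr)^{\FF(X)\cdot\mu(X)} = \nu(X)^{\mu(X)\cdot\FF(X)^+}$ trivially.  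So $\lambda$ satisfies exactly the arithmetic hypotheses imposed on the cardinal called $\lambda$ in Theorem \ref{tm:plus}.

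Next I would argue by contradiction: suppose $\FF(X_\delta) > \lambda$, so there is a free sequence $s = \langle x_\alpha : \alpha < \lambda^+\rangle$ in $X_\delta$ of length $\lambda^+$; in particular its range $A = \{x_\alpha : \alpha<\lambda^+\}$ has size $\lambda^+$.  The idea is to pass to a subspace $Y\subs X$ with $|Y| = \lambda^+$ that still contains $A$ (or a $\lambda^+$-sized ``copy'' of $s$) and to which Theorem \ref{tm:plus} applies, i.e. with $\mu(Y)$, $\FF(Y)$, $\nu(Y)$ all controlled so that the relevant arithmetic identity and the bound $\ge 2^{2^{\FF(Y)}}$ still hold with $\lambda$ in place of the abstract cardinal.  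The natural construction: take a continuous increasing chain $\langle M_\xi : \xi < \lambda^+\rangle$ of elementary submodels of $H(\vartheta)$, each of size $\lambda$, each $\mu(X)\cdot\FF(X)^+$-closed (legal since $\lambda^{\mu(X)\cdot\FF(X)^+}\le\lambda^{\FF(X)\cdot\mu(X)\cdot\FF(X)^+}$... here one must be slightly careful, so instead I would use the simpler closure $\lambda^{\FF(X)\cdot\mu(X)} = \lambda$, i.e. demand each $M_\xi$ to be $\kappa$-closed where $\kappa = \FF(X)\cdot\mu(X)$), with $x_\alpha \in M_{\alpha+1}\setm M_\alpha$ arranged, and with $X, A, s, \lambda \in M_0$.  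Let $M = \bigcup_{\xi<\lambda^+}M_\xi$ and put $Y = M\cap X$.  Then $|Y| = \lambda^+$, $A\subs Y$, and the key point is that $\FF(Y)\le\FF(X)$, $\mu(Y)\le\mu(X)$, $\nu(Y)\le\nu(X)$ — the first because a free sequence in $Y$ need not be free in $X$ in general, so this monotonicity is exactly the subtle issue (see below).

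Granting the monotonicity of these three cardinal functions when passing to the submodel-subspace $Y$, we get that $\lambda$ still satisfies $\lambda = \lambda^{\FF(Y)\cdot\mu(Y)}\ge 2^{2^{\FF(Y)}}$ and $|Y| = \lambda^+$, so Theorem \ref{tm:plus} applies to $Y$ and yields $\FF(Y_\delta)\le\lambda$.  On the other hand, because $M$ is $\FF(X)\cdot\mu(X)$-closed and the $x_\alpha$ were placed cofinally, $s$ is (up to reindexing by a club) a free sequence \emph{in $Y_\delta$} of length $\lambda^+$: any $G_\delta$-set of $Y$ separating an initial from a final segment is, by elementarity and closure, coded in $M$ and lifts to a $G_\delta$-set of $X$ doing the same job, so the freeness of $s$ in $X_\delta$ is inherited by $Y_\delta$.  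Hence $\FF(Y_\delta) > \lambda$, contradicting the previous line.  The main obstacle is precisely establishing $\FF(Y)\le\FF(X)$, $\mu(Y)\le\mu(X)$, $\nu(Y)\le\nu(X)$ for $Y = M\cap X$: a free set in the subspace $Y$ need not be free in $X$ (closures can differ), so one must use elementarity — given a free set $S\in[Y]^{<\kappa^+}$ with $S\in M$ (available by $\kappa$-closure), its $X$-closure $\overline S$ lies in $M$, has size $\le\nu(X)$, hence sits inside $M$, so $\overline S\cap Y = \overline S$ and the $Y$-closure of $S$ coincides with $\overline S$; this forces $S$ to be free in $X$ as well and simultaneously transfers the bounds on $|\overline S|$ and $\lin(\overline S)$.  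Once this reflection lemma is in hand, the rest is bookkeeping with cardinal arithmetic.
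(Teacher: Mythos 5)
Your overall strategy is the same as the paper's: verify that $\lambda=\lambda(X)$ satisfies $\lambda=\lambda^{\FF(X)\cdot\mu(X)}\ge 2^{2^{\FF(X)}}$, reflect a putative free sequence of length $\lambda^+$ in $X_\delta$ into an elementary-submodel space of cardinality $\lambda^+$ whose invariants $\FF,\mu,\nu$ are no larger than those of $X$, and then invoke Theorem \ref{tm:plus} for the contradiction. The one substantive difference is that you take the honest subspace $Y=M\cap X$, whereas the paper works with $X_M=\langle X\cap M,\tau_M\rangle$, the \emph{coarser} topology generated only by traces of open sets lying in $M$; with that coarser topology a non-free sequence of $X$ with a witness in $M$ is automatically non-free in $X_M$, so $\FF(X_M)\le\FF(X)$ comes almost for free, and the price paid is that one must carry the function $h$ coding the freeness of $s$ inside $M$ to keep $s$ free in $(X_M)_\delta$. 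Your choice makes the latter step trivial (traces of $G_\delta$'s of $X$ are $G_\delta$'s of $Y$, so freeness in $X_\delta$ passes to $Y_\delta$ with no elementarity at all — your ``lifting'' discussion is unnecessary) but pushes all the work into the reflection $\FF(Y)\le\FF(X)$, $\mu(Y)\le\mu(X)$, $\nu(Y)\le\nu(X)$, which you correctly identify as the crux. Incidentally, your worry about $\mu(X)\cdot\FF(X)^+$-closure is unfounded: $\lambda^{\mu(X)\cdot\FF(X)^+}=\lambda$ holds precisely because $\nu(X)^{\mu(X)\cdot\FF(X)^+}$ is a factor of $\lambda(X)$ and $\FF(X)^+\le 2^{\FF(X)}$; this stronger closure is exactly what the paper uses.

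The gap is in your reflection lemma, which as written is circular. You take $S\subs Y$ free \emph{in the subspace} $Y$ and bound $|\overline S|\le\nu(X)$ in order to get $\overline S\subs M$ and thence conclude that $S$ is free in $X$. But $\nu(X)=\sup\{|\overline S|:S\in\mc F(X)\}$ only applies to sets already known to be free in $X$ — which is the conclusion you are trying to reach. The repair is to reverse the order: for each $\beta$, the set $D_\beta=\overline{S_{<\beta}}\cap\overline{S_{\ge\beta}}$ (closures in $X$) lies in $M$ once the sequence does, so if $D_\beta\ne\empt$ then by elementarity $D_\beta\cap M\ne\empt$, and any such point violates freeness of $S$ in the subspace $Y$ (subspace closures are exactly the traces of $X$-closures). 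Hence $S$ is free in $X$ outright; \emph{only then} do $|\overline S|\le\nu(X)\le\lambda$ and $\overline S\subs M$ follow, which is what you need to identify the $Y$-closure with $\overline S$ and transfer the bounds on $\lin(\overline S)$ and $|\overline S|$ (note that $\lin$ is not monotone under passing to subspaces, so $\overline S\subs M$ really is needed for $\mu(Y)\le\mu(X)$, not just for $\nu$). One also has to handle a putative free set of size $\FF(X)^+$ in $Y$, which your $\FF(X)\cdot\mu(X)$-closed models need not contain as an element; this works because its initial segments of size $\le\FF(X)$ are in $M$ and the above argument applies segment by segment, but it must be said. With these repairs your argument goes through and is essentially the paper's proof in subspace clothing.
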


\begin{proof}
Our proof is indirect, so assume that $\FF(X_{\delta}) > \lambda(X)$, hence there is a sequence
$s = \langle x_\alpha : \alpha < \lambda(X)^+ \rangle$ of length $\lambda(X)^+$ that is free in $X_\delta$.
This means that there is a function $$h : X \times \lambda(X)^+ \to [\tau(X)]^\omega$$ such that
for each $x \in X$ and $\alpha < \lambda(X)^+$ we have $x \in \bigcap h(x,\alpha)$ and
$$\bigcap h(x,\alpha) \cap \{x_\beta : \beta < \alpha\} = \emptyset\,\,\text{ or }\,\,\bigcap h(x,\alpha) \cap \{x_\beta : \beta \ge \alpha\} = \emptyset.$$

To simplify our notation, let us put for the rest of our proof $\kappa = {\mu}(X) \cdot \FF(X)^+$. Then clearly $\lambda(X)^\kappa = \lambda(X)$,
hence also $(\lambda(X)^+)^\kappa = \lambda(X)^+$ hold. Consequently we may fix a $\kappa$-closed elementary submodel $M$ of
$H(\vartheta)$ for a large enough regular cardinal $\vartheta$  with $|M| = \lambda(X)^+$ such that $X,\,s,\,h,\,\lambda(X)^+ \in M$ and $\lambda(X)^+ \subs M$.

We then consider the $T_2$-space $X_M = \langle X \cap M,\,\tau_M\rangle$, where $\tau_M$ is the topology on $X \cap M$ generated by $\{U \cap M : U \in \tau(X) \cap M\}$, see e.g.
\cite{JT}. Then from $h \in M$ and $h \subs M$ it clearly follows that $s$ is a free sequence in $(X_M)_\delta$ as well, hence $$\FF\big((X_M)_\delta \big) = |X_M| = \lambda(X)^+.$$

Our next aim is to show that $\FF(X_M) = \FF(X)$. To see this, consider first any sequence $z = \langle y_\xi : \xi < \FF(X)^+ \rangle$ in $X \cap M$.
Then the $\kappa$-closure of $M$ implies $z \in M$, moreover $z$ is not free in $X$, hence by elementarity there is a point $x \in X \cap M$
that witnesses this, i.e. there is $\xi < \FF(X)^+$ such that for any $x \in U \in \tau(X)$ we have
$$\{y_\eta : \eta < \xi \} \cap U \ne \emptyset \ne \{y_\eta : \eta \ge \xi \} \cap U.$$ This clearly implies that
$x$ is a witness for  $z$ not being free in $X_M$ either,
hence we have $\FF(X_M) \le \FF(X)$.

On the other hand, if $\sigma$ is a cardinal for which there is a free sequence of length $\sigma$ in $X$ then, by elementarity again,
there is also such a free sequence $z = \langle y_\xi : \xi < \sigma \rangle$ in $X$
of length $\sigma$ that is a member and hence a subset of $M$.
Note however, that for every set $S \subs X$ if $S \in M$ and $|S| \le \FF(X)$ then we have both $\overline{S} \in M$
and $|\overline{S}| \le 2^{2^{\FF(X)}} \le \lambda(X)$, hence $\overline{S} \subs M$. These together imply $\overline{S}^M = \overline{S}$,
and so that $z$ remains a free sequence in $X_M$. Thus we also have $\FF(X_M) \ge \FF(X)$, consequently $\FF(X_M) = \FF(X)$.

Completely analogous arguments yield us that we also have $\mu(X_M) = \mu(X)$ and $\nu(X_M) = \nu(X)$.
But then we see that $X_M$ and $\lambda(X) = \lambda(X_M)$ satisfy the assumptions of Theorem \ref{tm:plus},
consequently $\FF\big((X_M)_\delta \big) \le \lambda(X)$, and so we arrived at a contradiction.

\end{proof}

The formulation of Theorem \ref{tm:ultimate-thm}, namely the choice of $\lambda(X)$, might not be simple or esthetic
but the following corollary is. It follows immediately from the simple observations that
$\mu(X) \le \nu(X) \le 2^{2^{\FF(X)}}$ if $X$ is $T_2$ and that $\mu(X) \le 2^{\FF(X)}$ if $X$ is even $T_3$.

\begin{corollary}\label{cr:ffdelta}
For every $T_2$-space $X$ we have
\begin{displaymath}
\FF(X_{\delta})\le 2^{2^{2^{\FF(X)}}}.
\end{displaymath}
Moreover, if $\mu(X) \le 2^{\FF(X)}$, in particular if $X$ is $T_3$, then
\begin{displaymath}
\FF(X_{\delta}) \le 2^{2^{\FF(X)}}.
\end{displaymath}
\end{corollary}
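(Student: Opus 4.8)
The plan is to derive Corollary \ref{cr:ffdelta} directly from Theorem \ref{tm:ultimate-thm} by simply estimating the quantity $\lambda(X) = 2^{\mu(X) \cdot 2^{\FF(X)}} \cdot \nu(X)^{\mu(X) \cdot \FF(X)^+}$ from above. Writing $\FF(X) = \kappa$ for brevity, the two facts we are allowed to use are: (1) for any $T_2$-space, $\mu(X) \le \nu(X) \le 2^{2^\kappa}$ (recorded in the excerpt right after the definition of $\mu$ and $\nu$, coming from $|\overline S| \le 2^{2^{|S|}}$ for $T_2$); and (2) for $T_3$-spaces, or more generally whenever $\mu(X) \le 2^\kappa$, we have the sharper bound on $\mu(X)$ (from part (ii) of Corollary \ref{cr:Lle22F}).

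First I would treat the general $T_2$ case. Substituting the crude bounds $\mu(X) \le 2^{2^\kappa}$ and $\nu(X) \le 2^{2^\kappa}$, and noting $\FF(X)^+ = \kappa^+ \le 2^{2^\kappa}$ as well, the first factor becomes $2^{\mu(X) \cdot 2^\kappa} \le 2^{2^{2^\kappa} \cdot 2^\kappa} = 2^{2^{2^\kappa}}$, while the second factor becomes $\nu(X)^{\mu(X) \cdot \kappa^+} \le \big(2^{2^\kappa}\big)^{2^{2^\kappa}} = 2^{2^\kappa \cdot 2^{2^\kappa}} = 2^{2^{2^\kappa}}$. Hence $\lambda(X) \le 2^{2^{2^\kappa}} \cdot 2^{2^{2^\kappa}} = 2^{2^{2^\kappa}}$, and Theorem \ref{tm:ultimate-thm} gives $\FF(X_\delta) \le \lambda(X) \le 2^{2^{2^{\FF(X)}}}$.

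Next I would do the improved case, assuming $\mu(X) \le 2^\kappa$ (which holds for $T_3$ by Corollary \ref{cr:Lle22F}(ii)); here I still only have $\nu(X) \le 2^{2^\kappa}$. Now the first factor is $2^{\mu(X) \cdot 2^\kappa} \le 2^{2^\kappa \cdot 2^\kappa} = 2^{2^\kappa}$. For the second factor, $\mu(X) \cdot \kappa^+ \le 2^\kappa$, so $\nu(X)^{\mu(X)\cdot\kappa^+} \le \big(2^{2^\kappa}\big)^{2^\kappa} = 2^{2^\kappa \cdot 2^\kappa} = 2^{2^\kappa}$. Therefore $\lambda(X) \le 2^{2^\kappa} \cdot 2^{2^\kappa} = 2^{2^\kappa}$, and Theorem \ref{tm:ultimate-thm} yields $\FF(X_\delta) \le 2^{2^{\FF(X)}}$.

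There is essentially no obstacle here: the entire content is in Theorem \ref{tm:ultimate-thm}, and this corollary is pure cardinal arithmetic bookkeeping — the only thing to be careful about is keeping track of which bound ($2^\kappa$ versus $2^{2^\kappa}$) applies to $\mu(X)$ versus $\nu(X)$ in each of the two cases, and remembering that $\kappa$ is infinite so absorptions like $2^\kappa \cdot 2^\kappa = 2^\kappa$ and $\kappa^+ \le 2^\kappa$ are legitimate. I would present it in two short paragraphs mirroring the two displayed inequalities in the statement.
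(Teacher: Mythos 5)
Your proposal is correct and is exactly the paper's intended argument: the authors derive the corollary by plugging the bounds $\mu(X)\le\nu(X)\le 2^{2^{\FF(X)}}$ (for $T_2$) and $\mu(X)\le 2^{\FF(X)}$ (for $T_3$) into the expression for $\lambda(X)$ in Theorem \ref{tm:ultimate-thm} and absorbing, just as you do. The cardinal arithmetic in both cases checks out.
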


\bigskip


\begin{thebibliography}{12}

\bibitem{Ju}
Juhász I; Cardinal functions – ten years later, Math. Centre
Tract 123 (1980). Amsterdam.

\bibitem{JT}
Junqueira L R.; Tall, F D; The topology of elementary submodels.
Topology Appl. 82 (1998), no. 1-3, 239–266.


\bibitem{SS} Spadaro S;
A short proof of a theorem of Juhász. Topology Appl. 158 (2011), no. 16, 2091–-2093.


\end{thebibliography}
\end{document}